\tikzset{axis/.style={&lt;-&gt;}}
\newcommand\reallywidehat[1]{%
\savestack{\tmpbox}{\stretchto{%
  \scaleto{%
    \scalerel*[\widthof{\ensuremath{#1}}]{\kern-.6pt\bigwedge\kern-.6pt}%
    {\rule[-\textheight/2]{1ex}{\textheight}}
  }{\textheight}%
}{0.5ex}}%
\stackon[1pt]{#1}{\tmpbox}%
}
 \definecolor{MyBlue}{rgb}{0.05, 0.25, 0.65}
 \definecolor{MyRed}{rgb}{0.90, 0.05, 0.05}
\definecolor{MyGreen}{rgb}{0.05, 0.90, 0.05}
\newcommand{\B}{\boldsymbol}
\newcommand{\C}[1]{\mathcal{#1}}
\newtheorem{theorem}{Theorem}[section]
\newtheorem{proposition}[theorem]{Proposition}
\newtheorem{definition}[theorem]{Definition}
\newcommand{\Nat}{{\mathbb N}}
\newcommand{\id}{\mathrm{id}}
\newcommand{\dom}{\mathrm{dom}}
\newcommand{\Cat}{\mathrm{\mathbf{Cat}}}
\newcommand{\TOT}{\Leftrightarrow}
\newcommand{\To}{\Rightarrow}
 \newcommand{\pto}{\rightharpoondown}
\newcommand{\CM}{\mathrm{CM}}
\newcommand{\Set}{\mathrm{\mathbf{Set}}}
\newcommand{\eto}{\hookrightarrow}
\newcommand{\tro}{\rightarrowtriangle}
\newcommand{\mto}{\hookrightarrow}
\newcommand{\op}{\mathrm{op}}
\newcommand{\Hom}{\mathrm{Hom}}
\newcommand{\Asm}{\C Asm}
\newcommand{\CMt}{\textnormal{\texttt{CM}}}
\newcommand{\Sim}{\textnormal{\texttt{Siml}}}
\newcommand{\CompMod}{\B {\textnormal{\texttt{CompMod}}}}
\newcommand{\TotCompMod}{\B {\textnormal{\texttt{TotCompMod}}}}
\newcommand{\cod}{\mathrm{cod}}
\newcommand{\pull}{\mathrm{pull}}
\newcommand{\Base}{\B {\textnormal{\texttt{Base}}}}
\newcommand{\Frg}{\mathrm{Frg}}
\newcommand{\Iso}{\mathrm{Iso}}
\begin{document}

\date{}

\title{\textbf{Computability models over categories}}

\author{Iosif Petrakis\\	
Mathematics Institute, Ludwig-Maximilians-Universit\"{a}t M\"{u}nchen\\
petrakis@math.lmu.de}	
%





\maketitle

\begin{abstract}
\noindent 
Generalising slightly the notions of a strict computability model and of a simulation between them, which were 
elaborated by Longley and Normann in~\cite{LN15}, we
define canonical computability models over categories and appropriate $\Set$-valued functors on them. 
We study the canonical total computability model over a category, and the partial 
one over a 
category with pullbacks.
Our notions and results are generalised to categories with a base of computability, connecting, unexpectedly,
Rosolini's theory 
of dominions with the theory of computability models. 
\\[2mm]
\textit{Keywords}: computability models, category theory, dominions
\end{abstract}

\section{Introduction}
\label{sec: intro}

In their book~\cite{LN15}
Longley and Normann not only give a comprehensive introduction to Higher-Order Computability
(HOC) presenting the various approaches to HOC, but they also fit them into a coherent and unifying
framework, making their comparison possible. Their main tool is a
general notion of computability model\footnote{This notion
is rooted in previous work of Longley in~\cite{Lo95}-\cite{Lo14}, and is influenced by the work of Cockett and Hofstra
in~\cite{CH08} and~\cite{CH10} (see~\cite{LN15}, p.~52).} and of an appropriate concept of simulation of one 
computability model in another. Turing machines, programming languages,
$\lambda$-calculus etc., are shown to be computational models, not necessarily in a unique way.

As it is remarked in~\cite{Co14}, p.~3, computability theory has ``still not received the level of categorical 
attention it deserves''. It seems though, that the ``categorical spirit'' of the notions of a computability model and a 
simulation between them is behind the remarkable success of the framework of Longley and Normann.
E.g., their, completely categorical in nature,
notion of equivalence of computability models (see Definition~\ref{def: siml}) is deeper
than the one suggested by the standard presentations of computability theory, according to which different notions 
of computability are equivalent if they generate the same class of partial computable functions from $\Nat$ to $\Nat$
(see~\cite{LN15}, Section 1.1.5). Longley and Normann associated in a canonical way 
to a computability model\footnote{In~\cite{LN15} 
this category is defined for lax computability models, but, as it is remarked in~\cite{LN15}, p.~91, the definition makes
sense for an arbitrary computability model.} $\B C$ its category of
assemblies $\Asm(\B C)$, and showed that the computability models $\B C$ and $\B D$ are 
equivalent if and only if the categories of assemblies $\Asm(\B C)$ and $\Asm (\B D)$ are
equivalent. The category of computability models and the corresponding functor $\B C \mapsto \Asm(\B C)$ 
are studied extensively in~\cite{Lo14}.

In this paper we associate in a canonical way a computability model to a category $\C C$, given an appropriate 
$\Set$-valued functor on $\C C$. For that we slightly generalise the definition of a computability model, allowing 
the classes of type names $T$ and of partial functions $C[\sigma, \tau]$ in a computability model to be proper classes.
The computability models considered in~\cite{LN15} are called \textit{small} in Definition~\ref{def: cm}, and 
working solely with them we can only define the computability model of a small category. 
The notion of a simulation between computability 
models is also slightly generalised, as the function between the corresponding classes of type names is, generally, 
a class-function. Although Longley and Normann usually work with \textit{lax} computability models, to our needs 
the so-called \textit{strict} computability models suit 
best\footnote{For a discussion on ``strict vs. lax'' see \cite{Lo14}, section 2.1.}.

In section~\ref{sec: cmodels} we include all basic notions and facts necessary to the rest of this paper.
In section~\ref{sec: totalcm} we study the canonical total computability model over a category $\C C$ and 
a functor $S \colon \C C \to \Set$. In section~\ref{sec: partialcm} we study the canonical (partial) 
computability model over a category $\C C$ with pullbacks and 
a functor $S \colon \C C \to \Set$ that preserves pullbacks. In section~\ref{sec: base} we define the notion of 
a base of computability for a category $\C C$, and the canonical (partial) 
computability model over a category $\C C$ with a base of computability for $\C C$ and a functor $S \colon \C C \to \Set$ 
that preserves pullbacks. The first two constructions are shown to be special cases of the last one. All computability models
defined here have their dual form e.g., the computability model over a category $\C C$ with a cobase of computability 
for $\C C$ and a functor $S \colon \C C \to \Set$ 
that sends pushouts to pullbacks is also defined.

For all categorical notions and facts mentioned here we refer to~\cite{Aw10}.


\section{Computability models}
\label{sec: cmodels}



\begin{definition}\label{def: cm}
Let $T$ be a class, the elements of which are called type names. 
A computability model $\B C$ over  $T$ is a pair 
$$\B C = \big(\big(\B C(\tau)\big)_{\tau \in T},\big(\B C[\sigma, \tau]\big)_{(\sigma, \tau) \in T \times T}\big),$$ 
where $|\B C| = \big(\B C(\tau)\big)_{\tau \in T}$ is a family of sets $\B C(\tau)$ over $T$, called the
datatypes of $\B C$, and $\big(\B C[\sigma, \tau]\big)_{(\sigma, \tau) \in T \times T}$ is a family of 
classes $\B C[\sigma, \tau]$  of \textit{partial functions} 
$\B C(\sigma) \pto \B C(\tau)$ over $T \times T$, such that the following hold:\\[1mm]
$(\CMt_1)$ The identity function $\id_{\B C(\tau)} \colon \B C(\tau) \to \B C(\tau)$ is in $\B C[\tau, \tau]$, for 
every $\tau \in T$,\\[1mm]
$(\CMt_2)$ For every $f \in \B C[\rho, \sigma]$ and $g \in \B C[\sigma, \tau]$, the composite partial
function $g \circ f$ is in 
$\B C[\rho, \tau]$, for every $\rho, \sigma, \tau \in T$.\\[1mm]
If the classes $\B C[\sigma, \tau]$ are sets, for every $\sigma, \tau \in T$, we call $\B C$ locally small. 
If $T$ is also a set, we call $\B C$ small.
If every element of $\B C[\sigma, \tau]$ is a total function, for every $\sigma, \tau \in T$, then $\B C$ is called total.
\end{definition}

For the rest of this paper $T, U, W$  are classes, and $\B C, \B D, \B E$ are computability models over $T, U, W$, 
respectively. 

\begin{definition}\label{def: siml}
A \textit{simulation} $\B \gamma$ of $\B C$ in $\B D$, in symbols $\B \gamma \colon \B C \tro \B D$, is a pair
$$\B \gamma = \big(\gamma, \big(\Vdash^{\B \gamma}_{\tau}\big)_{\tau \in T}\big),$$
where $\gamma \colon T \to U$ is a class function, and $\Vdash^{\B \gamma}_{\tau} \subseteq \B D(\gamma(\tau))
\times \B C(\tau)$, for every $\tau \in T$,  such that the following hold:
$$(\Sim_1) \ \ \ \ \ \ \ \ \ \ \ \ \ \ \ \ \ \ \ \ \ \ \ \ \ \ \ \ \forall_{\tau \in T}\forall_{x \in 
\B C(\tau)}\exists_{x{'} \in \B D(\gamma(\tau))}\big(x{'} \Vdash^{\B \gamma}_{\tau} x \big), \ \ \ \ \ 
\ \ \ \ \ \ \ \ \ \ \ \ \ \ \ \ \ \ \ \ \ \ \ \ $$
$$(\Sim_2) \ \ \ \ \ \ \ \ \ \ \ \ \ \ \ \ \ \ \ \ \ \ \ \ \ \ \ \ \ \forall_{\sigma, \tau
\in T}\forall_{f \in \B C[\sigma, \tau]}\exists_{f{'} \in \B D[\gamma(\sigma), \gamma(\tau)]}\big(f{'} 
\Vdash^{\B \gamma}_{(\sigma, \tau)} f\big), \ \ \ \ \ \ \ \ \ \ \ \ \ \ \ \ \ \ \ \ \ \ \ $$
where the relation ``$f$ is tracked by $f{'}$ through $\B \gamma$'' is defined by
$$f{'} \Vdash^{\B \gamma}_{(\sigma, \tau)} f :\TOT \forall_{x \in \B C(\sigma)}\big(x \in \dom(f) \To $$
$$\forall_{x{'} \in \B D(\gamma(\sigma))}\big(x{'} \Vdash^{\B \gamma}_{\sigma} x \To x{'} \in \dom(f{'}) \ 
\& \ f{'}(x{'}) \Vdash^{\B \gamma}_{\tau} f(x)\big)\big).$$
The identity simulation $\B \iota_{\B C} \colon \B C \tro \B C$ is the pair 
$\big(\id_T, (\Vdash^{\B \iota_{\B C}}_{\tau})_{\tau \in T}\big)$, where $x{'} \Vdash^{\B \iota_{\B C}}_{\tau} \TOT x{'} = x$, for every $x{'}, x \in \B C(\tau)$. If $\B \delta \colon \B D \tro \B E$, the composite simulation $\B \delta \circ \B \gamma \colon \B C \tro \B E$ is the pair $\big(\delta \circ \gamma, (\Vdash^{\B \delta \circ \B \gamma}_{\tau})_{\tau \in T}\big)$, where the relation $\Vdash^{\B \delta \circ \B \gamma}_{\tau} \subseteq \B E\big(\delta(\gamma(\tau))\big) \times \B C(\tau)$ is defined by
$$z \Vdash^{\B \delta \circ \B \gamma}_{\tau} x \TOT \exists_{y \in \B D(\gamma(\tau))}\big(z 
\Vdash^{\B \delta}_{\gamma(\tau)} y \ \& \ y \Vdash^{\B \gamma}_{\tau} x\big).$$
We denote by $\CompMod$ the category of 
computability models with simulations. If $\B \gamma, \B \delta \colon \B C \tro \B D$, then $\B \gamma$ 
is \textit{transformable} to $\B \delta$, in symbols $\B \gamma \preceq \B \delta$, if
$$\forall_{\tau \in T}\exists_{f \in \B D[\gamma(\tau), \delta(\tau)]}\forall_{x \in 
\B C(\tau)}\forall_{x{'}\in \B D(\gamma(\tau))}\big(x{'} \Vdash^{\gamma}_{\tau}x \To x{'} \in \dom(f) \ 
\& \ f(x{'}) \Vdash^{\B \delta}_{\tau} x\big).$$
The simulations $\B \gamma, \B \delta$ are \textit{equivalent}, in symbols $\B \gamma \sim \B \delta$, if
$\B \gamma \preceq \B \delta$ and $\B \delta \preceq \B \gamma$. The computational models $\B C$ and $\B D$ 
are \textit{equivalent}, if there are simulations $\B \gamma \colon \B C \tro \B D$ and $\B \delta \colon \B D 
\tro \B C$ such that $\B \delta \circ \B \gamma \sim \B {\iota_{\B C}}$ and $\B \gamma \circ \B \delta \sim
\B {\iota_{\B D}}$.
\end{definition}

If $f{'} \Vdash^{\B \gamma}_{(\rho, \sigma)} f$ and $g{'} \Vdash^{\B \gamma}_{(\sigma, \tau)} g$, 
then $g{'} \circ f{'} \Vdash^{\B \gamma}_{(\rho, \tau)} g \circ f$. Similarly, if $\B \gamma \preceq \B {\gamma{'}}$
and $\B \delta \preceq \B {\delta{'}}$, then $\B \delta \circ \B \gamma \preceq \B {\delta{'}} \circ \B {\gamma{'}}$, 
where $\B \gamma, \B {\gamma{'}} \colon \B C \tro \B D$ and $\B \delta, \B {\delta{'}} \colon \B D \tro \B E$.
%

\begin{definition}\label{def: assemblies}
The category $\Asm (\B C)$ of assemblies over 
$\B C$ has objects triplets $(X, \tau_X,
\Vdash_X)$, where $X$ is a set, $\tau_X \in T$, and $\Vdash_X \subseteq \B C(\tau_X) \times X$ such that
$$\forall_{x \in X}\exists_{x{'} \in \B C(\tau_X)}\big(x{'} \Vdash_X x\big).$$
A morphism $(X, \tau_X, \Vdash_X)
\to (Y, \tau_Y, \Vdash_Y)$ is a function $f \colon X \to Y$, such that there is $\bar{f} \in \B C[\tau_X, \tau_Y]$ 
that ``tracks'' $f$, in symbols $\bar{f} \Vdash^Y_X f$, where 
$$\bar{f} \Vdash^Y_X f :\TOT \forall_{x \in X}\forall_{y \in \B C(\tau_X)}\big(y \Vdash_X x  \To 
y \in \dom(\bar{f}) \ \& \ \bar{f}(y) \Vdash_Y f(x)\big).$$
The forgetful functor $\Frg^{\B C} \colon \Asm (\B C) \to \Set$ is defined, as usual, by 
$\Frg^{\B C}_0(X, \tau_X, \Vdash_X) = X$ and $\Frg^{\B C}_1(f) = f$. 
\end{definition}

Clearly, the forgetful functor $\Frg^{\B C}$ is injective on arrows.


%
%
%
%
%

\section{Total computability models over categories}
\label{sec: totalcm}

For the rest of this paper $\C C, \C D$  are categories, $\Cat$ is the category of categories, $\Set$ is the category of
sets, $[\C C, \Set]$ is the category of functors from $\C C$ to $\Set$, and $\Cat/\Set$ is the slice category of 
$\Cat$ over $\Set$. Let $C_0, C_1$ be the classes of objects and morphisms in $\C C$, respectively.
If $a, b \in C_0$, let $C_1(a, b)$ be the class of morphisms $f$ in $C_1$
with $\dom(f) = a$ and $\cod(f) = b$.

\begin{definition}\label{def: total}
Let $S \colon \C C \to \Set$ a $($covariant$)$ functor. The total computability model 
$\B {\CM}^t(\C C ; S)$ over $\C C$ and $S$ is the pair
$$\B {\CM}^t(\C C ; S) = \big(\big(S_0(a)\big)_{a \in C_0}, \big(S[a, b]\big)_{(a, b) \in C_0 \times C_0}\big),$$ 
$$S[a, b] = \{S_1(f) \mid f \in C_1(a, b)\}.$$
If $T \colon \C C^{\op} \to \Set$ is contravariant, the total computability model 
$\B {\CM}^t(\C C ; T)$ over $\C C$ and $T$ is defined dually i.e., $T[a, b] = \{T_1(f) \mid f \in C_1(b, a)\}.$
\end{definition}

Clearly, $\B {\CM}^t(\C C ; S)$ satisfies conditions $(\CM_1)$ and $(\CM_2)$.
With replacement, if $\C C$ is locally small, then $\B {\CM}^t(\C C ; S)$ is locally small, and if $\C C$ is small, then 
$\B {\CM}^t(\C C ; S)$ is small.
Next we describe an ``external'' and an ``internal''
functor in the full subcategory $\TotCompMod$ of total computability models. The corresponding proofs are omitted, as these 
are similar to the included proofs of Propositions~\ref{prp: partial1} and~\ref{prp: partial2}.

\begin{proposition}\label{prp: total1}
There is a functor $\CM^t \colon \Cat/\Set \to \TotCompMod$, defined by
$$\CM^t_0(\C C, S) = \B \CM^t(\C C ; S),$$
$$\CM^t_1\big(F \colon (\C C, S) \to (\C D, T)\big) : \B \CM^t(C ; S) \tro \B \CM^t(\C D ; T),$$
$$\CM^t_1(F) = \B \gamma_F = \big(F_0, (\Vdash^{\B \gamma_F}_a)_{a \in C_0}\big),$$
$$\Vdash^{\B \gamma_F}_a \subseteq T_0(F_0(a)) \times S_0(a), \ \ \ \ y \Vdash^{\B \gamma_F}_a x \TOT y = x.$$
\end{proposition}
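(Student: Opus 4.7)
The plan is to verify three things: that $\CM^t_1(F)$ is a well-defined simulation $\B \CM^t(\C C; S) \tro \B \CM^t(\C D; T)$ for every morphism $F$ in $\Cat/\Set$, that $\CM^t_1$ preserves identities, and that it preserves composition. The essential setup is that a morphism $F \colon (\C C, S) \to (\C D, T)$ in $\Cat/\Set$ is a functor $F \colon \C C \to \C D$ with $T \circ F = S$ as a \emph{strict} equality of functors; in particular $T_0(F_0(a)) = S_0(a)$ for every $a \in C_0$, and $T_1(F_1(h)) = S_1(h)$ as set-theoretic functions for every $h \in C_1$. This strict equality is precisely what makes the relation $y \Vdash^{\B \gamma_F}_a x \TOT y = x$ well-typed as a subset of $T_0(F_0(a)) \times S_0(a)$.

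To check $(\Sim_1)$, given $a \in C_0$ and $x \in S_0(a) = T_0(F_0(a))$, I would simply take $x{'} := x$. To check $(\Sim_2)$, I would write an arbitrary $f \in S[a,b]$ as $f = S_1(h)$ for some $h \in C_1(a,b)$ and pick $f{'} := T_1(F_1(h)) \in T[F_0(a), F_0(b)]$. The equality $T \circ F = S$ forces $f{'} = f$ as functions on $S_0(a)$, and because $\B \CM^t(\C C; S)$ is total, the tracking condition $f{'} \Vdash^{\B \gamma_F}_{(a,b)} f$ unfolds to the implication $x{'} = x \To f{'}(x{'}) = f(x)$, which is immediate by reflexivity.

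For identity preservation, the identity on $(\C C, S)$ in $\Cat/\Set$ is $\id_{\C C}$, and the associated simulation $\B \gamma_{\id_{\C C}}$ has underlying class function $\id_{C_0}$ with equality as realizability, matching $\B \iota_{\B \CM^t(\C C; S)}$ on the nose. For composition, given $F \colon (\C C, S) \to (\C D, T)$ and $G \colon (\C D, T) \to (\C E, U)$, both $\CM^t_1(G \circ F)$ and $\CM^t_1(G) \circ \CM^t_1(F)$ carry the class function $G_0 \circ F_0$, and their realizability relations agree because $z \Vdash^{\B \gamma_G \circ \B \gamma_F}_a x$ unfolds to $\exists_{y \in T_0(F_0(a))}\big(z = y \ \& \ y = x\big)$, which is equivalent to $z = x$. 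The only real point of care will be the strictness of $T \circ F = S$ in the slice category: if one worked instead with a pseudo-slice or a $2$-categorical version, the equality realizer would have to be replaced by transport along a natural isomorphism, and the definition of $\B \gamma_F$ would need a corresponding adjustment.
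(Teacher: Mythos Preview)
Your proof is correct and follows exactly the approach the paper intends: the paper omits the proof of this proposition, pointing instead to the analogous argument for Proposition~\ref{prp: partial1}, and your verification of $(\Sim_1)$ via $x{'}=x$, of $(\Sim_2)$ via the tracker $f{'}=T_1(F_1(h))=S_1(h)$, and of the functoriality conditions mirrors that argument step for step in the total setting. Your explicit remark that the strict equality $T\circ F=S$ in the slice category is what makes the equality realizer well-typed is a helpful clarification not spelled out in the paper.
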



\begin{proposition}\label{prp: total2}
There is a faithful functor
$^t\CM^{\C C} \colon [\C C, \Set] \to \TotCompMod$, defined by
$$^t\CM_0^{\C C}(S) = \B \CM^t(\C C ; S),$$
$$^t\CM_1^{\C C}\big(\eta \colon S \To T\big) : \B \CM^t(C ; S) \tro\B  \CM^t(\C D ; T),$$
$$^t\CM_1^{\C C}(\eta) = \B \gamma_{\eta} = \big(\id_{C_0}, (\Vdash^{\B \gamma_{\eta}}_a)_{a \in C_0}\big),$$
$$\Vdash^{\B \gamma_{\eta}}_a \subseteq T_0(a) \times S_0(a), \ \ \ \ y \Vdash^{\B \gamma_{\eta}}_a x \TOT y = \eta_a(x).$$
\end{proposition}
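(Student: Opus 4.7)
The plan is to verify, in turn, that each $\B \gamma_\eta$ is a well-defined simulation, that the assignment $\eta \mapsto \B \gamma_\eta$ respects identities and composition, and that it is injective on morphisms.

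First I would check that $\B \gamma_\eta \colon \B \CM^t(\C C ; S) \tro \B \CM^t(\C C ; T)$ satisfies Definition~\ref{def: siml}. Condition $(\Sim_1)$ is immediate, since for any $a \in C_0$ and $x \in S_0(a)$ the element $\eta_a(x) \in T_0(a)$ trivially satisfies $\eta_a(x) \Vdash^{\B \gamma_\eta}_a x$. For $(\Sim_2)$, any element of $S[a, b]$ has the form $S_1(g)$ for some $g \in C_1(a, b)$; I would propose $T_1(g) \in T[a, b]$ as a tracker, and observe that, since $S_1(g)$ is total, the tracking condition boils down to the pointwise identity $T_1(g)(\eta_a(x)) = \eta_b(S_1(g)(x))$ for every $x \in S_0(a)$, which is precisely the naturality square of $\eta$ at $g$. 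This is the step where genuine content enters the argument, and will be the main (though modest) obstacle.

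Next I would verify functoriality. For the identity natural transformation $\id_S$, the defining clause $y \Vdash^{\B \gamma_{\id_S}}_a x \TOT y = (\id_S)_a(x)$ reduces to $y = x$, so $^t\CM_1^{\C C}(\id_S)$ has the same underlying class function $\id_{C_0}$ and the same realizability relation as $\B \iota_{\B \CM^t(\C C ; S)}$. For composition, given $\eta \colon S \To T$ and $\mu \colon T \To U$ in $[\C C, \Set]$, I would expand the composite simulation $\B \gamma_\mu \circ \B \gamma_\eta$ using the formula in Definition~\ref{def: siml}: its relation is
$$z \Vdash^{\B \gamma_\mu \circ \B \gamma_\eta}_a x \TOT \exists_{y \in T_0(a)}\big(z = \mu_a(y) \ \& \ y = \eta_a(x)\big),$$
which collapses to $z = \mu_a(\eta_a(x)) = (\mu \circ \eta)_a(x)$, matching the defining relation of $\B \gamma_{\mu \circ \eta}$; the underlying class functions are $\id_{C_0}$ on both sides, so the two simulations are equal as pairs.

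Finally, faithfulness is read off directly from the definition of $\B \gamma_\eta$: if $\B \gamma_\eta = \B \gamma_{\eta{'}}$, then the relations $\Vdash^{\B \gamma_\eta}_a$ and $\Vdash^{\B \gamma_{\eta{'}}}_a$ coincide for every $a \in C_0$, which forces $\eta_a(x) = \eta{'}_a(x)$ for every $x \in S_0(a)$, and hence $\eta = \eta{'}$. No step poses a serious difficulty; the only place where the structure of natural transformations is actually used is the verification of $(\Sim_2)$, which reduces exactly to the naturality squares of $\eta$.
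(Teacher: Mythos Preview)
Your proposal is correct and follows essentially the same approach as the paper. The paper omits the proof of this proposition and refers to the analogous Proposition~\ref{prp: partial2} for the partial case; your argument is exactly the total-model specialisation of that proof (with $(\Sim_2)$ reducing to the naturality square of $\eta$, and functoriality and faithfulness verified just as there).
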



By Definition~\ref{def: assemblies} 
the category $\Asm\big(\B {\CM}^t(\C C;S)\big)$ of assemblies of the total
computability model $\B {\CM}^t(\C C ; S)$ 
has objects triplets $(X, a_X,
\Vdash_X)$, where $X$ is a set, $a_X \in C_0$, and $\Vdash_X \subseteq S_0(a_X) \times X$ such that
$\forall_{x \in X}\exists_{x{'} \in S_0(a_X)}\big(x{'} \Vdash_X x\big)$. A morphism $(X, a_X, \Vdash_X)
\to (Y, a_Y, \Vdash_Y)$ is a function $f \colon X \to Y$, such that there is $\bar{f} \in S[a_X, a_Y]$
i.e., $g \colon a_X \to a_Y \in C_1$ and $\overline{f} = S_1(g) \colon S_0(a_X) \to S_0(a_Y)$
tracks $f$, where in this case 
$\bar{f} \Vdash^Y_X f$ if and only if $\forall_{x \in X}\forall_{y \in S_0(a_X)}\big(y \Vdash_X x  \To 
[S_1(g)](y) \Vdash_Y f(x)\big)$.

\begin{proposition}\label{prp: total3}
Let $F^t \colon \C C \to \Asm\big(\B {\CM}^t(\C C;S)\big)$ be defined by
$$F^t_0(a) = \big(S_0(a), a, \Vdash_{S_0(a)}\big); \ \ \ \ a \in C_0,$$
$$y \Vdash_{S_0(a)} x \TOT y = x; \ \ \ \ y, x \in S_0(a),$$
$$F^t_1(f \colon a \to b) \colon \big(S_0(a), a \Vdash_{S_0(a)}\big) \to \big(S_0(b), b \Vdash_{S_0(b)}\big), \ \ 
\ \ F^t_1(f) = S_1(f).$$
\normalfont (i) 
\itshape $F^t$ is a full functor, injective on objects.\\[1mm]
\normalfont (ii) 
\itshape If $S$ is injective on arrows, then $F^t$ is an embedding. 
\end{proposition}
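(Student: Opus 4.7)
The plan is to unpack the definitions carefully and verify the claims by direct computation. Before addressing (i) and (ii), I first need to check that $F^t$ is well-defined as a functor. For an object $a$, the triplet $(S_0(a), a, \Vdash_{S_0(a)})$ is a legitimate assembly because the realizability relation is equality, so every $x \in S_0(a)$ is realized by itself. For a morphism $f \colon a \to b$ in $\C C$, the function $S_1(f)$ belongs to $S[a,b]$ by Definition~\ref{def: total}, and it tracks itself with respect to the equality realizers: if $y = x$ in $S_0(a)$, then $S_1(f)(y) = S_1(f)(x)$ in $S_0(b)$. Functoriality of $F^t$ then reduces to functoriality of $S$, since $F^t_1(\id_a) = S_1(\id_a) = \id_{S_0(a)}$ and $F^t_1(g \circ f) = S_1(g \circ f) = S_1(g) \circ S_1(f)$.

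For (i), injectivity on objects is immediate: if $F^t_0(a) = F^t_0(b)$, then comparing the middle component of the triplets gives $a = b$. For fullness, I will take an arbitrary morphism of assemblies $h \colon F^t_0(a) \to F^t_0(b)$, which by definition is a function $h \colon S_0(a) \to S_0(b)$ tracked by some $\bar{h} \in S[a,b]$. By the definition of $S[a,b]$, there exists $g \in C_1(a,b)$ with $\bar{h} = S_1(g)$. The key point is that because the realizability relations $\Vdash_{S_0(a)}$ and $\Vdash_{S_0(b)}$ are equality, the tracking condition specialises to: for every $x \in S_0(a)$, $\bar{h}(x) = h(x)$. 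Thus $h = \bar{h} = S_1(g) = F^t_1(g)$, which shows $F^t$ is full.

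For (ii), assume $S$ is injective on arrows, meaning $S_1 \colon C_1 \to \mathrm{Mor}(\Set)$ is injective. Then faithfulness of $F^t$ is immediate: if $F^t_1(f) = F^t_1(g)$, we have $S_1(f) = S_1(g)$, whence $f = g$. Combining faithfulness with the injectivity on objects established in (i), and with fullness, one gets that $F^t$ is a full embedding, as required.

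The verification is essentially bookkeeping; the only subtle point to highlight is that the equality-valued realizability relations chosen in the definition of $F^t_0$ are exactly what collapses the tracking condition to the pointwise equation $\bar{h} = h$. Without this collapse, fullness would fail in general, so this is the place where a slight care with the definition is needed.
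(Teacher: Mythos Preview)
Your proof is correct and follows essentially the same approach as the paper: you verify well-definedness and functoriality via $S$, obtain injectivity on objects from the middle component of the triplet, derive fullness by observing that the equality-valued realizability collapses the tracking condition to $\bar{h} = h$, and deduce faithfulness in (ii) directly from injectivity of $S$ on arrows. The paper's argument is identical in structure and detail.
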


\begin{proof}
(i) Clearly, $S_1(f)$ tracks itself, and hence $S_1(f)$ is a morphism in 
$\Asm\big(\B {\CM}^t(\C C;S)\big)$. Clearly, $F^t$ is a functor, injective on objects. Let $g \colon 
\big(S_0(a), a \Vdash_{S_0(a)}\big) \to \big(S_0(b), b \Vdash_{S_0(b)}\big)$ a morphism in 
$\Asm\big(\B {\CM}^t(\C C;S)\big)$ i.e., a function $g \colon S_0(a) \to S_0(b)$ such that there is 
$\overline{g} = S_1(h)$, for some $h \colon a \to b$ in $C_1$, with $S_1(h) \Vdash^{S_0(b)}_{S_0(a)} g$ i.e.,
$\forall_{x \in S_0(a)}\forall_{y \in S_0(a)}\big(y = x  \To [S_1(h)](y) = g(x)\big)$, hence
$\forall_{x \in S_0(a)}\big([S_1(h)](x) = g(x)\big)$ i.e., $F_1^+(h) = S_1(h) = g$ and $F^t$ is full.\\ 
(ii)  If $S$ is injective on arrows, then $F^t$ is also injective on arrows, hence faithful, and since it is also 
full and injective on objects, it is an embedding.
\end{proof}

Let $\B C$ be a small, and $\Frg^{\B C}$ 
the forgetful functor on $\Asm(\B C)$.
If $\overline{X} = (X, \tau_X, \Vdash_X)$ and
$\overline{Y} = (Y, \tau_Y, \Vdash_Y)$, the 
computability model $\B {\CM}^t(\Asm(\B C) ; \Frg^{\B C})$ 
over $\Asm(\B C)$ and $\Frg^{\B C}$ is the pair
$$\B {\CM}^t(\Asm(\B C) ; \Frg^{\B C}) = \bigg(\big(X\big)_{\overline{X} \in \Asm(\B C)_0}, 
\big(\Frg^{\B C}[\overline{X}, \overline{Y}]\big)_{\overline{X}, \overline{Y} \in \Asm(\B C)_0}\bigg),$$ 
$$\Frg^{\B C}[\overline{X}, \overline{Y}] = \big\{f \colon X \to Y \mid
\exists_{\overline{f} \colon C(\tau_x) \pto C(\tau_Y)}(\overline{f} \Vdash^Y_X f)\big\}.$$
I.e., from a small, and, in general, partial computability model $\B C$, we get the locally small and total
computability model $\B {\CM}^t\big(\Asm(\B C) ; \Frg^{\B C}\big)$. Due to lack of space, the proof of 
the following proposition is omitted.

\begin{proposition}\label{prp: total4}
\normalfont (i)  
\itshape If $\B C$ is small, there is a simulation 
$$\B \delta^t \colon \B {\CM}^t\big(\Asm(\B C) ; \Frg^{\B C}\big) \tro \B C,$$ 
where
$\B \delta^t = \big(\delta^t,\big(\Vdash^{\B \delta^t}_{\overline{X}}\big)_{\overline{X} \in \Asm(\B C)_0}\big),$
with $\delta^t \colon \Asm(\B C)_0 \to T$, defined by $\delta^t(\overline{X}) = \tau_X$, and  
$\Vdash^{\B \delta^t}_{\overline{X}} \subseteq C(\tau_X) \times X$ defined by $\Vdash^{\B \delta^t}_{\overline{X}} \ = \
\Vdash_X$.\\[1mm]
\normalfont (ii)
\itshape If $\B C$ is total and small, there is a simulation $$\B \gamma^t \colon \B C \tro 
\B {\CM}^t\big(\Asm(\B C) ; \Frg^{\B C}\big),$$
where
$\B \gamma^t = \big(\gamma^t,\big(\Vdash^{\B \gamma^t}_\tau\big)_{\tau \in T}\big),$
with $\gamma^t \colon T \to \Asm(\B C)_0$, defined by $\gamma^t(\tau) = (C(\tau), \tau, \Vdash_{C(\tau)})$
and $\Vdash_{C(\tau)} \subseteq C(\tau) \times C(\tau)$ is the equality on $C(\tau)$, and  
$\Vdash^{\B \gamma^t}_{\tau} \subseteq C(\tau) \times C(\tau)$ is also the equality on $C(\tau)$.\\[1mm]
\normalfont (iii)
\itshape If $\B C$ is total and small, then $\B \delta^t \circ \B \gamma^t = \B \iota_{\B C}$.\\[1mm]
\end{proposition}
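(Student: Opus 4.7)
My plan is to verify each of the three items essentially by unwinding the relevant definitions, since the simulations in question are designed so that their realizability relations reduce to the data already built into assemblies. I expect no real obstacle; the only thing to watch is that $(\Sim_2)$ for $\B \delta^t$ must be read against the specific form of elements of $\Frg^{\B C}[\overline{X},\overline{Y}]$, and that $(\Sim_2)$ for $\B \gamma^t$ uses totality of $\B C$ in an essential way (otherwise $f$ taken as its own tracker may fail the domain clause).

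For (i), the condition $(\Sim_1)$ reduces to: for every object $\overline{X}=(X,\tau_X,\Vdash_X)$ of $\Asm(\B C)$ and every $x\in X$, there is $x'\in C(\tau_X)$ with $x'\Vdash_X x$, which is exactly the surjectivity clause in the definition of an assembly. For $(\Sim_2)$, any $f\in \Frg^{\B C}[\overline{X},\overline{Y}]$ comes by definition equipped with some $\overline{f}\in C[\tau_X,\tau_Y]$ satisfying $\overline{f}\Vdash^Y_X f$; unfolding the simulation tracking relation with $\Vdash^{\B\delta^t}_{\overline{X}}\,=\,\Vdash_X$ and $\Vdash^{\B\delta^t}_{\overline{Y}}\,=\,\Vdash_Y$, and using that $f$ is a total function $X\to Y$ so that the hypothesis $x\in\dom(f)$ is automatic, we get precisely the tracking condition $\overline{f}\Vdash^Y_X f$ from Definition~\ref{def: assemblies}.

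For (ii), $(\Sim_1)$ is trivial since the realizability on $\gamma^t(\tau)$ is equality on $C(\tau)$: given $x\in C(\tau)$, take $x'=x$. For $(\Sim_2)$, fix $f\in \B C[\sigma,\tau]$ and take $f':=f$ itself, viewed as a function $C(\sigma)\to C(\tau)$. First we check $f'\in \Frg^{\B C}[\gamma^t(\sigma),\gamma^t(\tau)]$: $f$ tracks itself as a morphism of assemblies, since the hypothesis $y=x$ in $C(\sigma)$ immediately gives $f(y)=f(x)$ in $C(\tau)$, and totality of $\B C$ ensures $y\in\dom(f)$. Exactly the same computation yields $f\Vdash^{\B\gamma^t}_{(\sigma,\tau)} f$, as the relations $\Vdash^{\B\gamma^t}_\sigma$ and $\Vdash^{\B\gamma^t}_\tau$ are again equalities.

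For (iii), the first components compose to $\delta^t\circ\gamma^t(\tau)=\delta^t\big(C(\tau),\tau,\Vdash_{C(\tau)}\big)=\tau$, so the class function is $\id_T$. For the realizability relations, unfolding the composite gives $x''\Vdash^{\B\delta^t\circ\B\gamma^t}_\tau x$ iff there exists $y\in C(\tau)$ with $x''\Vdash^{\B\delta^t}_{\gamma^t(\tau)} y$ and $y\Vdash^{\B\gamma^t}_\tau x$; the former says $x''\Vdash_{C(\tau)} y$, i.e.\ $x''=y$, the latter says $y=x$, so the composite relation is equality on $C(\tau)$, which is precisely $\Vdash^{\B\iota_{\B C}}_\tau$. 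Hence $\B\delta^t\circ\B\gamma^t=\B\iota_{\B C}$.
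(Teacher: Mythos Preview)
Your proof is correct; each of the three parts reduces, exactly as you do, to a straightforward unwinding of the definitions, with totality of $\B C$ entering in (ii) precisely where you note it. The paper itself omits the proof of this proposition ``due to lack of space'', so there is nothing to compare against; your argument is the natural one and almost certainly what the author had in mind.
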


If $\B C$ is total and small, then $\B \gamma^t \circ \B \delta^t \sim
\B \iota_{\B {\CM}^t(\Asm(\B C) ; \Frg^{\B C})}$.
Similarly, if $\C A^t = \Asm\big(\B {\CM}^t(\C C;S)\big)$ and $\Frg^{\C A^t}$ is
the forgetful functor on $\C A^t$, 
the 
computability 
models ${\CM}^t(\C C;S)$ and ${\CM}^t\big(\C A^t, \Frg^{\C A^t}\big)$ are expected to be equivalent.

\section{Computability models over categories with pullbacks}
\label{sec: partialcm}

\begin{definition}\label{def: partial}
Let $C_1(a \eto b)$ be the subclass of monos in $C_1(a, b)$, let
$C_1(a \eto)$ be the subclass of monos in $C_1$ with domain $a$, and $C_1(\eto a)$ the subclass of monos in $C_1$ 
with codomain $a$. If $\C C$ has pullbacks, a partial
morphism\footnote{As here we don't define categories of partial morphisms, 
we avoid the equivalence relation between them. A
partial function between sets is a partial morphism in $\Set$. 
This is in accordance with the notion of partial function between sets in Bishop set theory see~\cite{Pe19} 
and~\cite{Pe20}.}
$(i, f) \colon a \pto b$ is a pair $(i, f)$, where $i \in C_1(\eto a)$
and $f \in C_1(\dom(i), b)$

%

\begin{center}
\begin{tikzpicture}

\node (E) at (0,0) {$s$};
\node[right=of E] (F) {$a$};
\node[below=of F] (A) {$b$};
\node[right=of F] (K) {$S_0(s)$};
\node[right=of K] (L) {$S_0(a)$};
\node[below=of L] (M) {$S_0(b)$.};

\draw[right hook ->] (E)--(F) node [midway,above] {$i$};
\draw[->] (E) to node [midway,left] {$ f \ $} (A) ;
\draw[right hook ->] (K)--(L) node [midway,above] {$S_1(i)$};
\draw[->] (K) to node [midway,left] {$ S_1(f) \ $} (M) ;
\draw[blue,-left to] (F) to node [midway,right] {$  (i, f)$} (A) ;
\draw[blue,-left to] (L) to node [midway,right] {$  S_1(i, f)$} (M) ;

\end{tikzpicture}
\end{center}
If $(j, g) \colon b \pto c$, where $j \colon t \mto b$ and $g \colon t \to c$, their composition is the 
partial morphism $(i \circ i{'}, g \circ f{'}) \colon a \pto c$, where $i{'} \colon s \times_b t \mto s$ 
and $f{'}  \colon s \times_b t \to t$ is determined by the corresponding pullback diagram
\begin{center}
\begin{tikzpicture}

\node (E) at (0,0) {$s$};
\node[right=of E] (F) {$a$};
\node[below=of F] (A) {$ b$};
\node[left=of E] (P) {$s \times_b t$};
\node[left=of A] (T) {$t \ \ $};
\node[below=of A] (C) {$c$};

\draw[right hook ->] (E)--(F) node [midway,above] {$i$};
\draw[->] (E) to node [midway,right] {$ \ f $} (A) ;
\draw[->] (T) to node [midway,left] {$g \ $} (C) ;
\draw[->] (P) to node [midway,left] {$f{'} \ $} (T) ;
\draw[right hook ->] (T)--(A) node [midway,above] {$j$};
\draw[right hook ->] (P)--(E) node [midway,above] {$i{'}$};
\draw[blue,-left to] (F) to node [midway,right] {$(i, f)$} (A) ;
\draw[blue,-left to] (A) to node [midway,right] {$(j, g)$} (C) ;
\draw[blue,-left to, bend left=90] (F) to node [midway,right] {$  (i \circ i{'}, g \circ f{'})$} (C) ;

\end{tikzpicture}
\end{center}
If $S \colon \C C \to \Set$ is a functor that preserves pullbacks, hence monos, and if $(i, f) \colon a \pto b$,
let
$S_1(i, f) = (S_1(i), S_1(f)) \colon S_0(a) \pto S_0(b)$. Let 
$$\dom(S_1(i, f)) = \big\{[S_1(i)](x) \mid x \in S_0(s)\big\},$$
$$[S_1(i, f)](y) = [S_1(f)](x); \ \ \ \ y = [S_1(i)](x) \in \dom(S_1(i, f)).$$
\end{definition}

\begin{definition}\label{def: partialcomp}
If $\C C$ has pullbacks, and $S \colon \C C \to \Set$  
preserves pullbacks, the $($partial$)$ computability model $\B {\CM}^p(\C C ; S)$ over $\C C$
and $S$ is the pair
$$\B {\CM}^p(\C C ; S) = \big(\big(S_0(a)\big)_{a \in C_0}, \big(S[a, b]\big)_{(a, b) \in C_0 \times C_0}\big),$$
$$S[a, b] = \{S_1(i, f) \mid i \in C_1(\eto a) \ \& \ f \in C_1(\dom(i), b)\}.$$
If $\C D$ has pushouts, and $T \colon \C C^{\op} \to \Set$ sends 
pushouts to pullbacks, the $($partial$)$ computability model $\B {\CM}^p(\C D ; T)$ over $\C D$
and $T$ is defined dually i.e., $T[a, b] = \{T_1(i, f) \mid i \in C_1(a \eto) \ \& \ f \in C_1(b, \cod(i))\}$.
\end{definition}

The fact that $\B {\CM}^p(\C C ; S)$ is a computability model depends crucially on the preservation of pullbacks by 
$S$. If $\C C$ is small, then $\B {\CM}^p(\C C ; S)$ is small, but if $\C C$ is locally small, 
$\B {\CM}^p(\C C ; S)$ need not be locally small. 
Next we describe the corresponding ``external'' and ``internal''
functors.

\begin{proposition}\label{prp: partial1}
Let $\Cat^{\pull}$ be the category of categories with pullbacks with morphisms the functors preserving 
pullbacks, and let $\Cat^{\pull}/\Set$ the slice category over $\Set$. If $\Cat^{\pull}_{\eto}{/}\Set$ is the subcategory of 
$\Cat^{\pull}/\Set$ with morphisms the morphisms in $\Cat^{\pull}/\Set$ that preserve monos\footnote{If 
$F \colon \C C \to \C D$ such that $T \circ F = S$, where $S \colon \C C \to \Set$ and $T \colon \C D \to \Set$, we cannot
show, in general, that $F$ preserves monos. It does, if, e.g., $T$ is injective on arrows.}, then  
%
%
$$\CM^s \colon \Cat^{\pull}_{\eto}/\Set \to \CompMod$$
is a functor, where
$\CM^p_0(\C C, S) = \B \CM^p(\C C ; S),$ and
$\CM^p_1(F) : \B \CM^p(C ; S) \tro \B \CM^p(\C D ; T)$ is defined as the simulation $\CM_1^t(F) = \B \gamma_{F}$
in Proposition~\ref{prp: total1}.
\end{proposition}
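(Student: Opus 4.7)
The plan is to mimic the proof of Proposition~\ref{prp: total1} but at each step verify the extra conditions needed in the partial setting, namely that monos and the partial-morphism construction are preserved. First, I would note that $\CM^p_0(\C C, S) = \B\CM^p(\C C;S)$ is already known to be a computability model by the remark following Definition~\ref{def: partialcomp} (using that $S$ preserves pullbacks). So the real content is to check that $\CM^p_1(F) = \B\gamma_F$ is a well-defined simulation, and that the assignment $F \mapsto \B\gamma_F$ respects identities and composition.

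Writing $F \colon (\C C, S) \to (\C D, T)$ in $\Cat^{\pull}_{\eto}/\Set$, the defining equation $T \circ F = S$ gives $T_0(F_0(a)) = S_0(a)$ and $T_1(F_1(h)) = S_1(h)$ for every object $a$ and morphism $h$ of $\C C$. Hence the relation $y \Vdash^{\B\gamma_F}_a x :\TOT y = x$ is well-typed, and $(\Sim_1)$ is immediate by taking $x' = x$. For $(\Sim_2)$, given an arbitrary element $S_1(i,f) \in S[a,b]$ with $i \in C_1(\eto a)$ and $f \in C_1(\dom(i), b)$, the natural candidate tracker is $T_1(F_1(i), F_1(f)) \in T[F_0(a), F_0(b)]$. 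This is where the hypothesis that $F$ preserves monos is essential: only under it is $F_1(i)$ a mono of $\C D$, so that $(F_1(i), F_1(f))$ qualifies as a partial morphism in $\C D$ and $T_1(F_1(i), F_1(f))$ is a legitimate element of $T[F_0(a), F_0(b)]$. Then, using $T_1(F_1(i)) = S_1(i)$ and $T_1(F_1(f)) = S_1(f)$, the two partial functions $T_1(F_1(i), F_1(f))$ and $S_1(i,f)$ have identical domain and identical values, so the tracking condition
\[
\forall_{x \in S_0(a)}\big(x \in \dom(S_1(i,f)) \To x \in \dom(T_1(F_1(i),F_1(f))) \ \& \ [T_1(F_1(i),F_1(f))](x) = [S_1(i,f)](x)\big)
\]
is verified by inspection.

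For functoriality, $\CM^p_1(\id_{(\C C, S)})$ is, by construction, the pair $(\id_{C_0}, (=))$, which is exactly $\B\iota_{\B\CM^p(\C C;S)}$. Given composable morphisms $F \colon (\C C, S) \to (\C D, T)$ and $G \colon (\C D, T) \to (\C E, U)$ of $\Cat^{\pull}_{\eto}/\Set$, the class-function component of $\CM^p_1(G \circ F)$ is $G_0 \circ F_0$, matching $\CM^p_1(G) \circ \CM^p_1(F)$. For the tracking relation, using $U \circ G \circ F = T \circ F = S$, the relation $z \Vdash^{\B\delta_G \circ \B\gamma_F}_a x$ unfolds via Definition~\ref{def: siml} to $\exists_{y}(z = y \ \& \ y = x)$, i.e.\ $z = x$, which is exactly $z \Vdash^{\B\gamma_{G \circ F}}_a x$.

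The main obstacle, and the reason for introducing the subcategory $\Cat^{\pull}_{\eto}/\Set$ rather than using $\Cat^{\pull}/\Set$ directly, is the preservation of monos in step $(\Sim_2)$: without it one cannot even write down a canonical partial morphism $(F_1(i), F_1(f))$ in $\C D$, let alone show that the associated $T$-image tracks $S_1(i,f)$. Everything else in the proof is a routine translation of the total case, since the fibrewise equality $T_0 F_0 = S_0$ and $T_1 F_1 = S_1$ forces all verifications about the relations $\Vdash^{\B\gamma_F}_a$ to reduce to trivial equality reasoning.
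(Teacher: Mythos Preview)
Your proposal is correct and follows essentially the same approach as the paper: both verify $(\Sim_1)$ trivially from $T_0(F_0(a)) = S_0(a)$, both take $(T_1(F_1(i)), T_1(F_1(f)))$ as the tracker in $(\Sim_2)$ and stress that mono-preservation of $F$ is what makes this a legitimate partial morphism in $\C D$, and both dispatch functoriality by the same direct computation. Your presentation is in fact slightly cleaner in making explicit that $T_1F_1 = S_1$ forces the two partial functions to literally coincide, whereas the paper unfolds the domain condition in more detail; but the argument is the same.
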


\begin{proof}
First we show that $\B \gamma_{F} = \big(F_0, (\Vdash^{\B \gamma_{F}}_a)_{a \in C_0}\big),$
where $\Vdash^{\B \gamma_{F}}_a \subseteq T_0(F_0(a)) \times S_0(a)$ is defined by $y \Vdash^{\B \gamma_{F}}_a x \TOT 
y = x$, is a simulation $\B \CM^p(C ; S) \tro\B  \CM^p(\C D ; T)$. To show $(\Sim_1)$, if $a \in C_0$ and 
$x \in S_0(a)$, then $S_0(a) = T_0(F_0(a)) \ni x = \Vdash^{\B \gamma_{F}}_a x$. To show $(\Sim_2)$, if $a, b \in C_0$
and $(S_1(i), S_1(f)) \colon S_0(a) \pto S_0(b) \in S[a, b]$, for some $s \in C_0$, we find an element of 
$T[F_0(a), F_0(b)]$ that tracks $(S_1(i), S_1(f))$. Let $(T_1 \circ F_1)(i, f) = \big(T_1(F_1(i)), T_1(F_1(f))\big)
\colon T_0(F_0(a)) \pto T_0(F_0(b))$, as $F$ preserves monos. If $x \in \dom(S_1(i, f))$ i.e., 
there is a (unique) $x{'} \in S_0(s)$ with $x = [S_1(i)](x{'})$, then if $y \in T_0(F_0(a) = S_0(a)$ such that $y
\Vdash^{\B \gamma_{F}}_a x$, then $y = x$. We show that $x \in \dom\big((T_1 \circ F_1)(i, f)\big)$, which 
by Definition~\ref{def: partial} means that $x = [T_1(F_1(i))](x{''})$, for some $x{'} \in T_0(F_0(s)) = S_0(s)$. Clearly,
$x{''} = x{'}$ works. By Definition~\ref{def: partial} the equality obtained unfolding the relation 
$[(T_1 \circ F_1(i, f)](x) \Vdash^{\B \gamma_{F}}_b [S_1(i, ff)](x)$ follows immediately. Clearly,
$\CM_1^p(1_{(\C C, S)}) = \B \iota_{\B \CM^p(\C C ; S)}$, and if $F \colon \C C \to \C D$,
$G \colon \C D \To \C E$, then $\B \gamma_{G \circ F} = \B \gamma_{G} \circ \B \gamma_{F}$.
\end{proof}

\begin{proposition}\label{prp: partial2}
If $\C C$ has pullbacks and $[\C C, \Set]^{\pull}$ is the category of functors from
$\C C$ to $\Set$ preserving pullbacks, then 
$$^p\CM^{\C C} \colon [\C C, \Set]^{\pull} \to \CompMod$$
is a faithful functor, 
where $^p\CM_0^{\C C}(S) = \B \CM^p(\C C ; S),$ and the simulation
$^p\CM_1^{\C C}(\eta) : \B \CM^p(C ; S) \tro\B  \CM^p(\C C ; T)$ is defined as $^t\CM_1^{\C C}(\eta) = \B \gamma_{\eta}$ 
in Proposition~\ref{prp: total2}.
\end{proposition}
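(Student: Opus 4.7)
The plan is to mirror the proof of Proposition~\ref{prp: partial1}, replacing the pullback-preserving functor $F \colon \C C \to \C D$ by a natural transformation $\eta \colon S \To T$ and invoking naturality of $\eta$ in place of functoriality of $F$.

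First I would check that $\B \gamma_{\eta} = \big(\id_{C_0}, (\Vdash^{\B \gamma_{\eta}}_a)_{a \in C_0}\big)$, with $y \Vdash^{\B \gamma_{\eta}}_a x \TOT y = \eta_a(x)$, is a simulation $\B \CM^p(\C C ; S) \tro \B \CM^p(\C C ; T)$. Axiom $(\Sim_1)$ is immediate, since $\eta_a(x)$ tracks $x$. For $(\Sim_2)$, given $(S_1(i), S_1(f)) \in S[a, b]$ with $i \colon s \mto a$ a mono in $\C C$ and $f \colon s \to b$, the candidate tracker is $(T_1(i), T_1(f)) \in T[a, b]$, which is well-defined because $T$ preserves pullbacks and hence monos. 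To verify tracking, one fixes $x \in \dom(S_1(i, f))$, so $x = [S_1(i)](x{'})$ for some $x{'} \in S_0(s)$, and sets $y = \eta_a(x)$. Naturality of $\eta$ at $i$ gives $y = [T_1(i)](\eta_s(x{'}))$, placing $y$ into $\dom(T_1(i, f))$, and naturality at $f$ then yields $[T_1(i, f)](y) = [T_1(f)](\eta_s(x{'})) = \eta_b([S_1(i, f)](x))$, which is exactly the required relation $[T_1(i, f)](y) \Vdash^{\B \gamma_{\eta}}_b [S_1(i, f)](x)$.

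Functoriality is then a routine unfolding. For identities, $^p\CM_1^{\C C}(\id_S) = \B \iota_{\B \CM^p(\C C ; S)}$ because both relations reduce to equality on $S_0(a)$. For composition, given $\eta \colon S \To T$ and $\eta{'} \colon T \To U$, the rule from Definition~\ref{def: siml} gives $z \Vdash^{\B \gamma_{\eta{'}} \circ \B \gamma_{\eta}}_a x \TOT \exists_{y \in T_0(a)}(z = \eta{'}_a(y) \ \& \ y = \eta_a(x)) \TOT z = (\eta{'} \circ \eta)_a(x)$, which is the defining relation of $\B \gamma_{\eta{'} \circ \eta}$. Faithfulness is equally direct: if $\B \gamma_{\eta} = \B \gamma_{\eta{'}}$, then for each $a \in C_0$ the graphs $\{(\eta_a(x), x) : x \in S_0(a)\}$ and $\{(\eta{'}_a(x), x) : x \in S_0(a)\}$ coincide, forcing $\eta_a = \eta{'}_a$ pointwise and hence $\eta = \eta{'}$.

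The only step carrying genuine content is the verification of $(\Sim_2)$: the crux is that the naturality square of $\eta$ at the mono $i$ is precisely what certifies $\eta_a(x) \in \dom(T_1(i, f))$. Everything else is pointwise set-theoretic book-keeping, supported by the standing hypothesis that $T$ preserves pullbacks (keeping $T_1(i)$ monic in $\Set$, as required by Definition~\ref{def: partial}).
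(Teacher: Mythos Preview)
Your proposal is correct and follows essentially the same route as the paper: the verification of $(\Sim_2)$ via the two naturality squares of $\eta$ at $i$ and at $f$, the routine check of functoriality, and the faithfulness argument by comparing the realizability relations all match the paper's proof step for step. The only cosmetic difference is notation (the paper writes $\tau, \eta$ for the composable pair and $\eta, \theta$ for the faithfulness argument).
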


\begin{proof}
First we show that $\B \gamma_{\eta} = \big(\id_{C_0}, (\Vdash^{\B \gamma_{\eta}}_a)_{a \in C_0}\big),$
where $\Vdash^{\B \gamma_{\eta}}_a \subseteq T_0(a) \times S_0(a)$ is defined by $y \Vdash^{\B \gamma_{\eta}}_a x \TOT 
y = \eta_a(x)$, is a simulation $\B \CM^p(C ; S) \tro\B  \CM^p(\C C ; T)$. To show $(\Sim_1)$, if $a \in C_0$ and 
$x \in S_0(a)$, then $T_0(a) \ni y = \eta_a(x) \Vdash^{\B \gamma_{\eta}}_a x$. To show $(\Sim_2)$, if $a, b \in C_0$
and $(S_1(i), S_1(f)) \colon S_0(a) \pto S_0(b) \in S[a, b]$, for some $s \in C_0$, we find an element of $T[a, b]$ 
that tracks $(S_1(i), S_1(f))$. Let $T_1(i, f) = \big(T_1(i), T_1(f)\big) \colon T_0(a) \pto T_0(b)$. We show that 
$$\big(T_1(i), T_1(f)\big) \Vdash^{\B \gamma_{\eta}}_{(a, b)} \big(S_1(i), S_1(f)\big).$$
Let $x \in S_0(a)$ such that $x \in \dom(S_1(i, f))$ i.e., there is a (unique) $x{'} \in S_0(s)$ with $x = [S_1(i)](x{'})$.
Let $y \in T_0(a)$ such that $y \Vdash^{\B \gamma_{\eta}}_a x \TOT y = \eta_a(x)$. First we show that 
$y \in \dom(T_1(i, f))$ i.e., $y = [T_1(i)](y{'})$, for some $y{'} \in T_0(s)$. If $y{'} = \eta_s(x{'})$, 
then by the commutativity of the following left
diagram we have that
\begin{center}
\begin{tikzpicture}

\node (E) at (0,0) {$S_0(s)$};
\node[right=of E] (F) {$S_0(a)$};
\node[below=of E] (S) {$T_0(s)$};
\node[below=of F] (A) {$T_0(a)$};

\node[right=of F] (K) {$S_0(s)$};
\node[right=of A] (L) {$T_0(s)$};
\node[right=of K] (M) {$S_0(b)$};
\node[right=of L] (N) {$T_0(b)$};

\draw[->] (F)--(A) node [midway,right] {$\eta_a$};
\draw[->] (E) to node [midway,above] {$S_1(i) $} (F) ;
\draw[->] (E)--(S) node [midway,left] {$\eta_s$};
\draw[->] (S) to node [midway,below] {$T_1(i)$ } (A) ;
\draw[->] (M)--(N) node [midway,right] {$\eta_b$};
\draw[->] (K) to node [midway,above] {$S_1(f) $} (M) ;
\draw[->] (K)--(L) node [midway,left] {$\eta_s$};
\draw[->] (L) to node [midway,below] {$T_1(f)$ } (N) ;

\end{tikzpicture}
\end{center}
$$[T_1(i)](y{'}) = [T_1(i)](\eta_s(x{'})) = \eta_a\big([S_1(i)](x{'})\big) = \eta_a(x) = y.$$
Next we show that $[T_1(i, f)](y) \Vdash^{\B \gamma_{\eta}}_b [S_1(i, f)](x)$ i.e., 
$[T_1(f)](y{'}) \Vdash^{\B \gamma_{\eta}}_b [S_1(f)](x{'})$. By the commutativity of the above right diagram
we have that 
$$\eta_b\big([S_1(f)](x{'})\big) = [T_1(f)]\big(\eta_s(x{'})\big) = [T_1(f)](y{'}).$$
It is straightforward to show that $^p\CM_1(1_S \colon S \To S) = \B \iota_{\B \CM^p(\C C ; S)}$, 
and if $\tau \colon S \To T$,
$\eta \colon T \To U$, then $\B \gamma_{\eta \circ \tau} = \B \gamma_{\eta} \circ \B \gamma_{\tau}$.
To show that $^p\CM^{\C C}$ is faithful, let $\eta, \theta \colon S \To T$ such that 
$\B \gamma_{\eta} = \B \gamma_{\theta}$ i.e.,
$\Vdash^{\B \gamma_{\eta}}_a = \Vdash^{\B \gamma_{\theta}}_a$, for every $a \in C_0$. If we fix some $a \in C_0$, let
$y \in T_0(a)$ and $x \in S_0(a)$. Then $y \Vdash^{\B \gamma_{\eta}}_a x \TOT y \Vdash^{\B \gamma_{\theta}}_a x$ i.e.,
$y = \eta_a(x) \TOT y = \theta_a(x)$, hence $\eta_a = \theta_a$, and since $a \in C_0$ is arbitrary, we conclude that
$\eta = \theta$.
\end{proof}

The category $\Asm\big(\B {\CM}^p(\C C;S)\big)$ of assemblies of 
$\B {\CM}^p(\C C ; S)$ 
has objects the standard triplets $(X, a_X,
\Vdash_X)$, and a morphism $(X, a_X, \Vdash_X)
\to (Y, a_Y, \Vdash_Y)$ is a function $f \colon X \to Y$, such that there is $\bar{f} \in S[a_X, a_Y]$
i.e., there is $s \in C_0$ and $i \colon s \eto a_X, f \colon s \to a_Y$ such that $\bar{f}  = \big(S_1(i), S_1(f)\big)$
and $\bar{f} \Vdash^Y_X f$. One can define a functor $F^p \colon \C C \to \Asm\big(\B {\CM}^p(\C C;S)\big)$, as in 
Proposition~\ref{prp: total3}. In this case the function $F^p_1(f \colon a \to b) =  S_1(f)$ is tracked by the partial 
function $\overline{S_1(f)} = \big(S_1(1_a), S_1(f)\big)$. This functor $F^p $ though, is not, in general, full.
If $\C A^p = \Asm\big(\B {\CM}^p(\C C;S)\big)$ and $\Frg^{\C A^p}$ is
the corresponding forgetful functor, one can study the total computability model $\B {\CM}^t\big(\C A^p ;
\Frg^{\C A^p}\big)$ and the partial computability model $\B {\CM}^p\big(\C A^p ;
\Frg^{\C A^p}\big)$, and relate the latter to $\B {\CM}^p(\C C;S)$.



\section{Categories with a base of computability}
\label{sec: base}


\begin{definition}\label{def: base}
A base of 
computability\footnote{Having defined this notion independently, it was a nice surprise, 
and a posteriori not that surprising, to find out that a base of computability is Rosolini's notion of dominion,
defined in~\cite{Ro86}, section 2.1.} for $\C C$
is a family $B = (B(a))_{a \in C_0}$, where $B(a)$ is a subclass of the class $C_1(\eto a)$ of monos with codomain $a$, 
for every $a \in C_0$,
such that the following conditions are satisfied:\\[1mm]
$(\Base_1)$ For every $a \in C_0$, we have that $1_a \in B(a)$.\\[1mm]
$(\Base_2)$ For every $i \colon s \mto a \in B(a)$, for every $b \in C_0$, for every $f \colon s \to b$ 
and for every $j \colon t
\mto b \in B(b)$ a pullback $s \times_b t$ exists and $i \circ i{'} \colon s \times_b t \mto a \in B(a)$
\begin{center}
\begin{tikzpicture}

\node (E) at (0,0) {$s \times_b t$};
\node[right=of E] (F) {$t$};
\node[below=of E] (S) {$s$};
\node[below=of F] (A) {$b$.};
\node[left=of S] (K) {$a$};

\draw[left hook ->] (F)--(A) node [midway,right] {$j$};
\draw[->] (E) to node [midway,above] {$ f{'} $} (F) ;
\draw[right hook ->] (E)--(S) node [midway,right] {$i{'}$};
\draw[->] (S) to node [midway,below] {$f$ } (A) ;
\draw[left hook ->] (S)--(K) node [midway,below] {$i$};
\draw[blue,->] (E) to node [midway,above] {} (K) ;

\end{tikzpicture}
\end{center}
A cobase of computability for $\C C$ is a family $C = (C(a))_{a \in C_0}$, where $C(a)$ is a subclass of the 
class $C_1(a \eto)$ of monos with domain $a$ such that $(\Base_1)$ and the dual of $(\Base_2)$ are satisfied.
\end{definition}


Notice that the interesction 
of two bases 
need not be a base, if different pullbacks are considered in them.   

\begin{proposition}\label{prp: base1}
\normalfont(i) 
\itshape If $I(a) = \{1_a \colon a \mto a\}$, for every $a \in C_0$, then $I = (I(a))_{a \in C_0}$ is a base and a cobase
of computability for $\C C$.\\[1mm]
\normalfont(ii) 
\itshape If $\Iso(a) = \{i \in C_1(\eto a) \mid i \ \mbox{is an iso}\}$, for every $a \in C_0$, then 
$\Iso = (\Iso(a))_{a \in C_0}$ is a base of computability for $\C C$.\\[1mm]
\normalfont(iii) 
\itshape If $\C C$ has pullbacks, and $B(a) = C_1(\eto a)$, for every $a \in C_0$, then $B^{\eto} = (B(a))_{a \in C_0}$ is a base 
of computability for $\C C$.\\[1mm]
 \normalfont(iv) 
\itshape If $\C C$ has pushouts, and $C(a) = C_1(a \eto)$, for every $a \in C_0$, then $C^{\hookleftarrow}
= (C(a))_{a \in C_0}$ is a cobase of computability for $\C C$.
\end{proposition}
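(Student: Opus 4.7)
In each of the four items the first axiom $(\Base_1)$ is immediate: identities are always monos, always isomorphisms, and they belong to every class $I(a), \Iso(a), B(a), C(a)$ by definition. So the work is to verify $(\Base_2)$ (or its dual in (iv)), for which I plan to rely on the following two standard categorical facts, which I will invoke without further comment: (a) the pullback of a monomorphism along any morphism exists as soon as any pullback of the cospan exists and, when it exists, the pulled-back arrow is again a monomorphism; (b) the composite of two monomorphisms is a monomorphism (and, analogously, the pullback of an isomorphism along any morphism is an isomorphism, and the composite of two isomorphisms is an isomorphism).

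For (i), observe that the only relevant instance of $(\Base_2)$ has $i = 1_a$, $s = a$, $j = 1_b$, $t = b$; the cospan $f, 1_b$ admits the canonical pullback with vertex $s = a$, legs $1_s$ and $f$, so the pulled-back mono $i{'} = 1_s$ lies in $I(s)$, and $i \circ i{'} = 1_a \in I(a)$. The dual axiom for cobases is verified by the same display, read in the opposite direction with pushouts in place of pullbacks.

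For (ii), given $i \colon s \mto a$ in $\Iso(a)$, any $f \colon s \to b$, and any $j \colon t \mto b$ in $\Iso(b)$, form the pullback of $f$ against $j$: since $j$ is iso, this pullback exists canonically (e.g.\ with vertex $s$, legs $1_s$ and $j^{-1} \circ f$), and the pulled-back leg $i{'}$ of $j$ is an isomorphism by fact (a)'s iso-version; composing with the iso $i$ yields $i \circ i{'} \in \Iso(a)$, as required.

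For (iii), pullbacks exist by hypothesis, so for any $i \colon s \mto a$ in $C_1(\eto a)$, any $f \colon s \to b$, and any $j \colon t \mto b$ in $C_1(\eto b)$, the pullback $s \times_b t$ is given; by (a) the arrow $i{'} \colon s \times_b t \mto s$ is a mono, and by (b) the composite $i \circ i{'}$ is a mono with codomain $a$, hence lies in $B^{\eto}(a) = C_1(\eto a)$. Item (iv) follows by formal duality: reading the diagram backwards, pushouts (which exist by hypothesis) play the role of pullbacks, epimorphisms the role of monomorphisms, and the dual versions of (a) and (b) give exactly what is needed. I do not anticipate any genuine obstacle here; the only care needed is to keep track of the fact that in $(\Base_2)$ the mono $j$ of the base $B(b)$ is used, not an arbitrary mono, which is why $(\Base_2)$ imposes a real closure condition on $B$ in general, even though in (i)--(iv) it is automatic from (a) and (b).
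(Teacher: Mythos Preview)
Your arguments for (i)--(iii) are correct and match the paper's: for (i) you exhibit the same identity pullback square, for (ii) you spell out what the paper compresses into ``$a = s \times_b t$'' (since $j$ is iso, $s$ itself serves as the pullback with $i'=1_s$, so $i\circ i'=i\in\Iso(a)$), and for (iii) your use of ``pullback of a mono is mono'' and ``composite of monos is mono'' is exactly what ``trivial'' abbreviates.

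Your treatment of (iv), however, does not match the paper's definition. You write that under formal duality ``epimorphisms [play] the role of monomorphisms'', but look again at Definition~\ref{def: base}: a cobase $C$ has each $C(a)$ a subclass of $C_1(a\eto)$, the \emph{monos} with domain $a$, and then one imposes the dual of $(\Base_2)$ (pushouts in place of pullbacks, composites $i'\circ i$ in place of $i\circ i'$). So a cobase is \emph{not} literally a base for $\C C^{\op}$; the paper keeps monomorphisms and only reverses the direction and the limit. Your appeal to ``dual versions of (a) and (b)'' with epis therefore does not address what is actually required, namely that for $i\in C_1(a\eto)$, $j\in C_1(b\eto)$ and $f\colon b\to\cod(i)$, the pushout leg $i'$ composed with $i$ lies again in $C_1(a\eto)$. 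The paper records this case as ``trivial'' without further comment; you should at least state the condition in the form the definition demands rather than invoke a duality that changes monos to epis.
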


\begin{proof}
(i) We fix $a \in C_0$, $b \in C_0$ and $f \colon a \to b$. The only element of $I(b)$ is $1_b \colon b \mto b$, and 
the following is a pullback square with $1_a  \circ 1_a = 1_a \in I(a)$
\begin{center}
\begin{tikzpicture}

\node (E) at (0,0) {$a$};
\node[right=of E] (F) {$b$};
\node[below=of E] (S) {$a$};
\node[right=of S] (A) {$b$.};
\node[left=of S] (K) {$a$};

\draw[left hook ->] (F)--(A) node [midway,right] {$1_b$};
\draw[->] (E) to node [midway,above] {$ f $} (F) ;
\draw[right hook ->] (E)--(S) node [midway,right] {$1_a$};
\draw[->] (S) to node [midway,below] {$f$} (A) ;
\draw[left hook ->] (S)--(K) node [midway,below] {$1_a$};
\draw[blue,->] (E) to node [midway,above] {} (K) ;

\end{tikzpicture}
\end{center}
(ii) In this case $a = s \times_b t$. Cases (iii) and (iv) are trivial.
\end{proof}

%
%

\begin{definition}\label{def: basemodel}
If $B = (B(a))_{a \in C_0}$ is a base of computability for $\C C$ and 
$S \colon \C C \to \Set$ preserves pullbacks, the computability model $\B {\CM}^B(\C C ; S)$ over $\C C$
and $S$ with respect to $B$ is the pair
$$\B {\CM}^B(\C C ; S) = \big(\big(S_0(a)\big)_{a \in C_0}, \big(S[a, b]\big)_{(a, b) \in C_0 \times C_0}\big),$$
$$S[a, b] = \{S_1(i, f) \mid i \in B(a) \ \& \ f \in C_1(\dom(i), b)\}.$$
If $C = (C(a))_{a \in C_0}$ is a cobase of computability for $\C C$ and $T \colon \C C^{\op} \to \Set$  
sends pushouts to pullbacks, the computability model $\B {\CM}^B(\C C ; T)$ over $\C D$
and $T$ is defined dually i.e., 
$T[a, b] = \{T_1(i, f) \mid i \in C(a) \ \& \ f \in C_1(b, \cod(i))\}$.
\end{definition}

If $B(a)$ is a set, for every $a \in C_0$, and $\C C$ is locally small, then $\B {\CM}^B(\C C ; S)$ 
is locally small. 
By Proposition~\ref{prp: base1}, if $B = I$, then $\B {\CM}^I(\C C ; S) = \B {\CM}^t(\C C ; S)$. Actually, any functor 
preserves the pullbacks related to the base $I$. If 
$B = B^{\eto}$, then $\B {\CM}^{B^{\eto}}(\C C ; S) = \B {\CM}^p(\C C ; S)$.
Consequently, the results shown in the previous two sections can be seen as special cases of the corresponding 
results for categories with a base of computability.

%
%

\section{Concluding comments}
\label{sec: concl}

The notions and results presented here form the very first steps in the study of canonical computability models 
over categories and $\Set$-valued functors on them. 
Next we include some topics that could be investigated in the future.

To study more examples of computability models over categories with a given base of computability coming from
Rosolini's theory of dominions.
To study the computability models 
over a locally small category with pullbacks and the pullback preserving, covariant representable functors $\Hom(a, -)$.
To find interesting conditions on a computability model $\B C$ such that its category of assemblies $\Asm(\B C)$ has a base 
of computability $B$, and relate $\B C$ with $\B \CM^B\big(\Asm(\B C) ; \Frg^{\B C}\big)$.
To study the computability models over cartesian closed categories, as these are computability models over
a \textit{type world}, in the sense of ~\cite{LN15}, p.~66. To relate the various  
constructions in categories to their canonical computability models. 
To examine the impact of a Grothendieck topology on $\C C$ to some computability model over $\C C$.
To generalise the notions of a computability model and of a simulation between them so that the datatypes $C(\tau)$ are
not necesserily in $\Set$, but elements of some topos $\C S$, and then study $\C S$-computability models over 
a category $\C C$ and an $\C S$-valued functor on $\C C$. But most importantly, to relate the theory of
$\Set$-computability models over categories, introduced here, to concrete case studies from the theory of 
Higher-Order Computability, as this is developed in~\cite{LN15}. 

\end{document}